\numberwithin{equation}{section}
\theoremstyle{plain}
\newtheorem{thm}{Theorem}[section]
\newtheorem{lemma}{Lemma}[section]
\newtheorem{definition}{Definition}[section]
\newdefinition{rmk}{Remark}
\newproof{pf}{Proof}
\begin{document}

\title{The Fourier dimension of Brownian limsup fractals}
\author{Paul Potgieter}
\ead{potgip@unisa.ac.za}
\address{Department of Decision Sciences, University
of South Africa, P.O. Box 392, Pretoria 0003, South Africa}
\cortext[cor1]{Tel: +27 12 433 4603}

\begin{abstract}
Robert Kaufman's proof that the set of rapid points of Brownian motion has a Fourier dimension equal to its Hausdorff dimension was first published in 1974. A study of the proof of the original paper revealed several gaps in the arguments and a slight inaccuracy in the main theorem. This paper presents a new version of the construction and incorporates some recent results in order to establish a corrected version of Kaufman's theorem. The method of proof can then be extended to show that functionally determined rapid points of Brownian motion also form Salem sets for absolutely continuous functions of finite energy.
\end{abstract}

\begin{keyword}
Brownian motion \sep Rapid points \sep Hausdorff dimension \sep Fourier dimension \sep Salem set 

\MSC[2010]  60J65 \sep 60G17 \sep 28A80 \sep 43A46 \sep 42B10 
\end{keyword}
\maketitle 

\section{Introduction} 
\label{intro}

Brownian motion has proved itself a rich source of sets with interesting dimensional properties. It is well known that the level sets have Hausdorff dimension $1/2$, and it has recently been shown that they have equal Fourier dimension in \citet{FoucheMukeru}, implying that they form Salem sets. It has been accepted since 1974 that the rapid points of Hausdorff dimension also form Salem sets, due to the work of \citet{OreyTaylor} and \citet{Kaufman}. A study of Kaufman's paper on the Fourier dimension of the set of rapid points has, however, raised several questions regarding the proof. Due to the significance of the result to the study of sample path properties of Brownian motion, it was felt that an updated and expanded version of the proof was merited. Furthermore, such methods can shed light on random fractals whose Hausdorff dimensions have been found, but whose Fourier dimensions are not yet known. 

As in \citet{Potgieter}, we take the following as the definition of one dimensional Brownian motion:

\begin{definition}\label{def:1.1}
Given a probability space $(\Omega, \mathcal{B}, \mathbf{P})$, a
{\emph{Brownian motion}} is a stochastic process $X$ from
$\Omega \times [0,1]$ to $\mathbb{R}$ satisfying the following
properties:
\begin{enumerate}
\item{Each path $X(\omega, \cdot): [0,1]\to \mathbb{R}$ is almost
surely continuous} 
\item{$X(\omega,0) = 0$ almost surely}
\item{For $0\leq t_1 <t_2 \cdots < t_n \leq 1$, the random
variables $X(\omega, t_1 ), X(\omega ,t_2)- X(\omega , t_1),\dots,
X(\omega , t_n )- X(\omega ,t_{n-1})$ are independent and normally
distributed with mean $0$ and variances $t_1, t_2 - t_1 ,\dots ,t_n -
t_{n-1}$, respectively.}
\end{enumerate}
\end{definition}

We let $X$ denote a continuous Brownian motion on $[0,1]$, and let $X(t)$ denote the value of a sample path
$X(\omega)$ at $t$, if there is no confusion as to which sample path
is used. We define the random set of rapid points relative to the
parameter $\alpha$ (also referred to as the $\alpha$-rapid points) by
\begin{equation}\label{eq:1.1}E_{\alpha} (\omega) = \left\{t\in [0,1]: \limsup_{h\to 0}
\frac{|X(t+h)-X(t)|}{\sqrt{2|h|\log{1/|h|}}} \geq
\alpha\right\}.\end{equation}

Again, we discard $\omega$ and just write $E_{\alpha}$ when there is
no confusion as to the sample path. The sets $E_{\alpha}$ have
Lebesgue measure $0$ almost surely. They are exceptional points of rapid growth, since the
usual local growth behaviour is described by Khintchine's law of the
iterated logarithm 3\citep{Khintchine}:
\begin{equation}\label{eq:1.2}\mathbf{P}\left\{\limsup_{h\to 0} \frac{X(t_0
+h)-X(t_0)}{\sqrt{2|h|\log{\log{1/|h|}}}}=1\right\}=1,\end{equation} for any
prescribed $t_0$.

\citet{OreyTaylor} showed that the sets
$E_{\alpha}$ have Hausdorff dimension $1-\alpha^2$. An elementary proof of the result can be found in~\cite{Potgieter}.

In this paper we define the Fourier-Stieltjes transform of a measure $\mu$ with support $A\subseteq \mathbb{R}$ as \begin{equation}
\hat{\mu}(u) = \int_{A}e^{ixu}d\mu (x).
\end{equation}

We have the
following definition:
\begin{definition}\label{def:1.2}
An $M_{\beta}$-set is a compact set in $\mathbb{R}^n$ which carries
a measure $\mu$ such that $\hat{\mu}(u) =\textrm{o}(|u
|^{-\beta})$ as $| u | \to \infty$, for some $\beta >0$. For a compact set $E$, we call
the supremum of the $\alpha$ such that $E$ is an $M_{\alpha
/2}$-set, the \emph{Fourier dimension} of $E$. We shall denote this
by $\textrm{dim}_F E$.
\end{definition}

The Fourier dimension is a somewhat more elusive character than
Hausdorff dimension. They are usually different, as for instance in
the case of the triadic Cantor set, which has strictly positive Hausdorff
dimension but a Fourier dimension of $0$ \citep{Kechris}. Indeed, it
can be shown that Hausdorff dimension majorises Fourier dimension.
When the dimensions coincide, the set is called a Salem set. Kaufman's paper claims to establish the following result:

\begin{thm}\label{thm:2.1}
\citep{Kaufman} With probability 1, a certain compact
subset of $E_{\alpha}$, the $\alpha$-rapid points of a given
Brownian motion $X$, carries a probability measure $\mu$ such that
$\hat{\mu} (\xi ) = o(| \xi |^{\frac{1}{2}(\alpha^2 -1)}).$ 
\end{thm}

It should be noted that this result claims that the supremum referred to in the definition of the Fourier dimension is actually attained, which does not seem to obviously follow from the arguments in the original paper. Instead, we prove a slightly weakened version of Kaufman's theorem, although the result still implies the equality of the Hausdorff and Fourier dimensions. The main result of this paper is the following:

\begin{thm}\label{thm:1.1} Given $\alpha$, $0<\alpha<1$, then for each $\gamma < 1 - \alpha^2$ there exists, with probability $1$, a measure $\mu$ on a compact subset of $E_{\alpha}$ such that
$\hat{\mu} (u) = o(|u|^{-\frac{\gamma}{2}})$ That is, $E_{\alpha}$ is a set of Fourier dimension
$1-\alpha^2$.
\end{thm}

(The fact that Hausdorff dimension majorises Fourier dimension guarantees that the Fourier dimension of $E_{\alpha}$ cannot be larger than $1-\alpha^2$.)

The structure of the proof is initially similar to Kaufman's. Specifically, we use two of the same lemmas that Kaufman did, albeit slightly altered.
These do not require innovative techniques to prove, only a refinement of Kaufman's original arguments. We prove the first of these in Section 2. The second lemma, the domain of which had to be slightly limited, is proved in Section 3. This is followed by the proof of the main theorem, where  we need to depart from Kaufman's construction to a larger extent. The measures constructed in the various stages in his paper do not necessarily converge to a probability measure, and it is only inferred that the limit measure has the requisite properties, never proven. It has proved difficult to mend the construction using only the Fourier-analytical methods originally considered. It became necessary to introduce certain dimensional arguments \citep{Potgieter} which give us estimates on the actual number of intervals considered at each stage. In this way, it is quite obvious that the Hausdorff dimension of the set is instrumental in guaranteeing the desired Fourier dimension, and in such a precise way that the set becomes Salem. In Section 4, we define functionally determined rapid points of Brownian motion, which have properties similar to those of the sets $E_{\alpha}$. The Hausdorff dimension of such sets was found by \citet{DeheuvelsMason}, in an extension of the Orey-Taylor theorem. Further interesting extensions were obtained \citet{KhosPerX}. We extend Kaufman's result to show that the functionally determined rapid points also form Salem sets, almost surely.

\section{Sums of random variables}

In this section, we establish a lemma necessary for the proof of Theorem \ref{thm:1.1}, in addition to some supporting inequalities. Suppose that
$\{\xi_n : 1\leq n \leq m\}$ is a finite set of independent random variables with
a common distribution
\[\mathbf{P}\{\xi_n =1\}=p=1-\mathbf{P}\{\xi_n =0\}.\] We want
estimates for sums of the form $\sum (p-\xi_n )a_n$, $a_n \in \mathbb{C}$. Let $\sigma^2
=\sum_{n=1}^{m} |a_n |^2$ and $B=\max_{n=1,\dots ,m}|a_n |$.
\begin{lemma}\citep{Kaufman}\label{lem:2.1}
Provided that $YB<2p\sigma^2$ and all else as above,
\begin{equation}\label{eq:2.2} \mathbf{P}\left\{ |\sum_{n=1}^{m} (p-\xi_n )a_n|\geq Y\right\} \leq
4e^{-\frac{1}{16} p^{-1}
\sigma^{-2}Y^2}.\end{equation}
\end{lemma}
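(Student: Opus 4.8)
The plan is to use the exponential-moment (Chernoff) method, after reducing the complex-valued sum to a pair of real-valued ones. Writing $a_n = b_n + i c_n$ and $S = \sum_{n=1}^m (p-\xi_n)a_n$, the event $\{|S| \ge Y\}$ forces either $|\Re S| \ge Y/\sqrt2$ or $|\Im S| \ge Y/\sqrt2$, since otherwise $|S|^2 < Y^2$. A union bound over these two possibilities, together with a further split of each real tail into its upper and lower halves, accounts for the factor $4$ in \eqref{eq:2.2}; the two imaginary tails are handled exactly as the real ones, and the lower tails follow from the upper tails upon replacing $a_n$ by $-a_n$. It therefore suffices to bound $\mathbf{P}\{\sum_{n=1}^m (p-\xi_n)b_n \ge Y/\sqrt2\}$ by a single exponential.

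Set $Z_n = (p-\xi_n)b_n$, so the $Z_n$ are independent, $\mathbf{E}[Z_n] = 0$, $|Z_n| \le |b_n| \le B$, and $\mathbf{E}[Z_n^2] = b_n^2 p(1-p) \le b_n^2 p$. For $\lambda > 0$, Markov's inequality applied to $e^{\lambda \sum_n Z_n}$ gives $\mathbf{P}\{\sum_n Z_n \ge s\} \le e^{-\lambda s}\prod_{n=1}^m \mathbf{E}[e^{\lambda Z_n}]$. The key estimate is the per-term bound: as long as $\lambda B \le 1$ one has $\lambda|Z_n| \le 1$, and the elementary inequality $e^x \le 1 + x + x^2$ (valid for $x \le 1$) together with $\mathbf{E}[Z_n]=0$ yields
\[\mathbf{E}[e^{\lambda Z_n}] \le 1 + \lambda^2 \mathbf{E}[Z_n^2] \le e^{\lambda^2 b_n^2 p}.\]
Multiplying over $n$ and using $\sum_{n=1}^m b_n^2 \le \sigma^2$ gives $\prod_n \mathbf{E}[e^{\lambda Z_n}] \le e^{\lambda^2 p \sigma^2}$, so $\mathbf{P}\{\sum_n Z_n \ge s\} \le \exp(-\lambda s + \lambda^2 p \sigma^2)$.

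It remains to choose $\lambda$ with $s = Y/\sqrt2$. The unconstrained optimum is $\lambda = s/(2p\sigma^2)$, producing the exponent $-s^2/(4p\sigma^2) = -Y^2/(8p\sigma^2)$, which is stronger than the claimed $-\tfrac{1}{16}p^{-1}\sigma^{-2}Y^2$. The main point, and the only place the hypothesis is used, is that this optimal $\lambda$ must satisfy the admissibility constraint $\lambda B \le 1$; this reads $sB \le 2p\sigma^2$, i.e.\ $YB \le 2\sqrt2\, p\sigma^2$, which is guaranteed (with room to spare) by the assumption $YB < 2p\sigma^2$. I expect the main obstacle to be precisely this bookkeeping --- keeping the exponential tilt inside the range where the quadratic moment bound is legitimate --- rather than any analytic difficulty; the constants can then be reconciled at the end, the stated $\tfrac{1}{16}$ being a safe weakening of the $\tfrac18$ that the sharp optimization delivers.
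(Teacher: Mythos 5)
Your proof is correct and follows essentially the same route as the paper: an exponential-moment (Chernoff) bound with the per-term estimate $\mathbf{E}[e^{\lambda Z_n}]\leq e^{\lambda^2 b_n^2 p}$ valid for $\lambda B\leq 1$, the tilt optimized subject to that constraint (which is where $YB<2p\sigma^2$ enters), and a real/imaginary split plus union bound supplying the factor $4$. Your only deviations are cosmetic improvements --- splitting at $Y/\sqrt{2}$ rather than $Y/2$ and bounding $\sum b_n^2$ by $\sigma^2$ before optimizing, which yields the sharper exponent $-\tfrac{1}{8}p^{-1}\sigma^{-2}Y^2$ and cleanly sidesteps the admissibility check that the paper leaves implicit for the real and imaginary parts separately.
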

\begin{pf} We use a basic inequality that can be easily
obtained by writing out the series expansion of $e$:
\begin{equation}\label{eq:2.2.1} pe^{t(1-p)}+(1-p)e^{-pt} \leq 1+p(1-p)t^2 \leq e^{t^2 p},\end{equation}
which is valid for $0\leq p \leq 1$, $-1\leq t \leq 1$. We now turn
to Chebyshev's inequality, in the form
\begin{equation} \mathbf{P}\{X\geq k\}\leq \frac{1}{k}\mathbf{E}(X)\end{equation}
for any random variable $X$ with finite expectation.  Kaufman
mentions that he deduces the final inequality \ref{eq:2.2} by using Chebyshev; we feel
it is instructive to elaborate on this, in the interest of completeness. We now give the
details required to obtain an estimate of the probability
\begin{equation}
\mathbf{P}\left\{ |\sum_{n=1}^{m}(p-\xi_n) a_n|\geq Y\right\} .\end{equation}
(Where the range of the summation and product is clear, we will omit it from now on.) Note that this will be the same as the probability
\begin{equation} \mathbf{P}\{e^{t|\sum(p-\xi_n) a_n|}\geq e^{tY}\} \textrm{ for any } 0< t \leq 1.\end{equation}
Set $Z=\sum(p-\xi_n )a_n$, where the $a_n$ are, for the moment, assumed to be real-valued. Because of the assumed independence of
the $\xi_n$, the expected value of $e^{tZ}$ can be evaluated by the
integral 
\begin{eqnarray}
\int e^{tZ}d\mathbf{P} &=& \prod_{n=1}^{m} \int e^{(p-\xi_n)a_n
t}d\mathbf{P}\nonumber \\
&=& \prod (pe^{(1-p)a_n t}+(1-p)e^{-pa_n t}) \nonumber \\
&\leq & \prod e^{pt^2 a_{n}^{2}} \nonumber \\
&=& e^{pt^2\sigma^2}\label{eq:2.2.0}
\end{eqnarray}
(the range of integration is understood to be the the entire probability space over which the $\xi_n$ are defined).

Note that we have used the basic inequality \ref{eq:2.2.1} repeatedly with $a_n t$
instead of just $t$, and must require that $0\leq \max |a_n | t \leq
1$. It then follows from Chebyshev that
\begin{equation}  \mathbf{P}\{Z\geq Y\}=\mathbf{P}\{e^{tZ}\geq e^{tY}\}\leq e^{t^2 p\sigma^2 -tY}.\end{equation}
By symmetry in the use of $e^{tZ}$ and $e^{-tZ}$ in \ref{eq:2.2.0}, we can conclude that
\begin{equation} \mathbf{P}\{|Z|\geq Y\}\leq
2e^{pt^2\sigma^2-tY}.\end{equation} We now choose a specific
value of $t$ to minimise the right hand side of the inequality. It
is easily seen that $t=Y/2p\sigma^2 $ is such a value (and still satisfies
the condition $0\leq Bt\leq 1)$. By substituting
for $t$, this yields
\begin{equation}\label{eq:2.2.2} \mathbf{P}\{|Z|\geq Y\} \leq 2e^{ -\frac{1}{4}p^{-1}\sigma^{-2}Y^2} ,\end{equation} as long as $YB\leq 2p\sigma^2 $. We do
still require an inequality for complex values of $a_n$ and can
obtain a rough but useful one by considering separate probabilities for the real and imaginary parts of the terms $a_n$. This gives us a probability

\begin{equation}
\mathbf{P}\{|Z|\geq Y\} \leq 4e^{ -\frac{1}{16}p^{-1} \sigma^{-2}Y^2}
\end{equation} 
which proves the lemma. 
\end{pf}

To proceed with the proof of the
theorem we need some further standard inequalities. Let
\begin{equation} S(a,b)=\max |X(b)-X(a)|, \quad 0\leq a < b\leq 1.\end{equation}
The exact distribution of $S$ was found by \citet{Feller}, but
we need only an approximation of the following form:
\begin{equation}\label{eq:2.4}\mathbf{P}\{S\geq Y\}=e^{-\frac{1}{2}Y^2}e^{o(Y^2)}, \quad
Y\to \infty.\end{equation} 
This can be established by using the stationarity of intervals, the reflection principle (e.g. on p26 of \citet{ItoMcKean}) and, for instance, Lemma 4 on p11 of \citet{Freedman}. Since this approximation will be used later, we state it here:
\begin{equation}\label{eq:2.5}\left( \frac{1}{Y} -\frac{1}{Y^3}\right) e^{-\frac{1}{2}Y^2} \leq \frac{1}{\sqrt{2\pi}}\int_{Y}^{\infty}e^{-\frac{1}{2}x^2}dx \leq
 \frac{1}{Y} e^{-\frac{1}{2}Y^2}, \quad Y>0.\end{equation}

We will use inequality \ref{eq:2.4} in the following form:
\begin{equation} \label{eq:2.7} \mathbf{P}\left\{\max_{0\leq s\leq h} X(s)\geq \lambda h^{\frac{1}{2}}\right\} = h^{\beta^2 (1+o(1))}\end{equation} as $\lambda \to \infty$ (for $\lambda = \beta \sqrt{2\log{h^{-1}}}$).

\section{Large increments of Brownian motion}

We can now start our construction, although we first recall the initial stages of Kaufman's construction, the better to highlight differences between the two.  In the original proof, the construction of the first measure 
proceeded as follows: Let $0\leq r<s\leq 1$, $\beta <\alpha$ and let
$I_n$ be a division of the interval $(r,s)$ into $N$ equal intervals
of length $(s-r)N^{-1}$. Each interval $I_n$ is further subdivided
into intervals $I^{q}_{n}$ of length $(s-r)bN^{-1}$, where it is assumed
that $b^{-1}\in \mathbb{Z}$ and $1\leq q\leq b^{-1}$. An interval
$I^{q}_{n}$ with lower extremity $x$ is selected if
\begin{equation}\label{eq:2.9} X(x+h)-X(t) \geq (\beta -2b^{\frac{1}{2}})(2h\log h^{-1})^{1/2} \textrm{ on }
x\leq t \leq x+bh,\end{equation} where $h$ is the length of the interval. The
selection of the intervals $I^{q}_{n}$ ($1\leq n \leq N$) are
mutually independent for each $q$, with the probability $p=p_N \geq
N^{-\beta^2}N^{o(1)}$ for large $N$, which follows from \ref{eq:2.7}. Let $\mu_0$ be
Lebesgue measure on $[r,s]$ and let $\xi$ be the characteristic
function of the selected intervals. For a Borel subset $A$ of
$[r,s]$, define
\begin{equation}\label{def:2.1}
\mu_1 (A) = p^{-1} \xi(A) \mu_0 (A).
\end{equation}
(We suppose that $\xi(A)=1$ if there is some $x\in A$ such that $\xi
(x)=1$, and $\xi (A)=0$ otherwise.)

We will retain the above definition of $\mu_1$, however the definition of $\xi$ will need to change in order to continue the construction beyond the first step. 

Instead of considering a large fluctuation between the initial segment and final point 
of an interval as above, we now consider an interval to be ``rapid" at each stage of the construction 
if there exist any two points within it between which the fluctuation is large enough. That is,
we let $A$ be the subset of intervals of the form $[(k-1)N^{-1},kN^{-1}]$ of an equal division of $[0,1]$ into $N$
intervals for which
\begin{equation}\label{eq:2.11}
\exists s,t \in [(k-1)N^{-1},kN^{-1}] \left( N^{-\frac{1}{2}}|X(t)-X(s)|\geq \beta \sqrt{2\log N}\right),
\end{equation}
where $\beta < \alpha$.
Note that we no longer need the constant $b$ in this formulation. We let $\xi^{1}_n (u)=1$ if $u\in [(n-1)N^{-1},nN^{-1}]$ and the interval is rapid, and $0$ otherwise. For simplicity, we just write $\xi_n$. The new probability corresponding to this choice of interval clearly has the same lower bound (\ref{eq:2.7}) as the one considered above, and we will continue to use this lower bound throughout the rest of the paper. 

The following lemma differs in its statement from the version proposed by Kaufman principally by the range of the statement, i.e. the condition $u\geq 0$ has had to be replaced by $u> 1$. 

\begin{lemma}\label{lem:2.2}
Given $\varepsilon >0$, for large enough $N$ the inequality
\begin{equation}\label{eq:2.10}|\hat{\mu}_1 (u) -\hat{\mu}_0(u)|<\varepsilon
|u|^{\frac{1}{2}(\alpha^2 -1)}\end{equation} holds for all $|u|>1$, with
probability approaching $1$ as $N \to \infty$.
\end{lemma}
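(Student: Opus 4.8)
The plan is to recognise the left-hand side of (\ref{eq:2.10}) as exactly the object controlled by Lemma \ref{lem:2.1}, and then to promote the resulting single-frequency estimate to a uniform-in-$u$ statement by discretization. Writing $I_n=[(n-1)N^{-1},nN^{-1}]$ and $a_n(u)=\int_{I_n}e^{ixu}\,dx$, the definition (\ref{def:2.1}) gives
\[ \hat{\mu}_1(u)-\hat{\mu}_0(u)=p^{-1}\sum_{n=1}^{N}(\xi_n-p)a_n(u), \]
so that $|\hat{\mu}_1(u)-\hat{\mu}_0(u)|=p^{-1}|Z|$ with $Z$ the sum appearing in Lemma \ref{lem:2.1}. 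A direct computation gives $|a_n(u)|=2|u|^{-1}|\sin(u/2N)|$ for every $n$, whence $B=2|u|^{-1}|\sin(u/2N)|$ and $\sigma^2=NB^2$.

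First I would treat a single frequency $u$. Applying Lemma \ref{lem:2.1} with $Y=p\varepsilon|u|^{\frac12(\alpha^2-1)}$, so that the event $|\hat{\mu}_1-\hat{\mu}_0|\geq\varepsilon|u|^{\frac12(\alpha^2-1)}$ is precisely $|Z|\geq Y$, turns the exponent into $\tfrac{\varepsilon^2}{64}\,p\,|u|^{\alpha^2+1}\big/\big(N\sin^2(u/2N)\big)$. In the range $1<|u|\leq N$ one has $\sin^2(u/2N)\asymp(u/N)^2$, and the lower bound $p\geq N^{-\beta^2+o(1)}$ coming from (\ref{eq:2.7}) reduces this exponent to at least a constant times $N^{\alpha^2-\beta^2+o(1)}$; since $\beta<\alpha$ this is a genuine positive power of $N$. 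One also checks that the side condition $YB<2p\sigma^2$ holds throughout this range once $\varepsilon$ is small, and we may assume $\varepsilon$ small since the claim only weakens as $\varepsilon$ grows. Thus for each fixed $u$ in this range the failure probability is at most $4\exp(-cN^{\alpha^2-\beta^2+o(1)})$.

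Next I would remove the restriction to a single $u$. Because $\tfrac{d}{du}(\hat{\mu}_1-\hat{\mu}_0)(u)=p^{-1}\sum_n(\xi_n-p)\int_{I_n}ixe^{ixu}\,dx$ has modulus at most $p^{-1}/2$, the map $u\mapsto\hat{\mu}_1(u)-\hat{\mu}_0(u)$ is Lipschitz with a constant only polynomial in $N$. Hence it suffices to control it on a net of spacing a fixed negative power of $N$, and the number of net points in $(1,N]$ is then polynomial in $N$. A union bound bounds the total failure probability on $(1,N]$ by $N^{O(1)}\cdot 4\exp(-cN^{\alpha^2-\beta^2+o(1)})$, which tends to $0$.

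Finally, the tail $|u|>N$ must be handled separately, and this is where I expect the main obstacle. Writing $G(u)=\sum_n(\xi_n-p)e^{i(n-1)u/N}$, one has $\hat{\mu}_1(u)-\hat{\mu}_0(u)=p^{-1}\frac{e^{iu/N}-1}{iu}G(u)$, and $G$ is periodic with period $2\pi N$, so $|\hat{\mu}_1(u)-\hat{\mu}_0(u)|\leq 2p^{-1}|u|^{-1}\sup_v|G(v)|$ for all $u$. Applying Lemma \ref{lem:2.1} to $G$ itself (now $|a_n|=1$, $\sigma^2=N$), together with a net over one period and the crude bound $|G'|\leq N$, shows $\sup_v|G(v)|\leq\sqrt{pN}\,N^{\delta}$ with probability tending to $1$. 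For $|u|>N$ this yields $|\hat{\mu}_1(u)-\hat{\mu}_0(u)|\leq 2p^{-1/2}N^{1/2+\delta}|u|^{-1}$, which lies below $\varepsilon|u|^{\frac12(\alpha^2-1)}$ precisely because $\frac{1+\beta^2}{\alpha^2+1}<1$ when $\beta<\alpha$; the replacement $u\mapsto-u$ produces the complex conjugate and so covers negative frequencies. The delicate point throughout is to keep the net fine enough that the slowly decaying target $\varepsilon|u|^{\frac12(\alpha^2-1)}$ is respected between sample points while keeping the number of points polynomial, so that the super-polynomially small per-point bound survives the union bound; what makes this possible is exactly the strict inequality $\beta<\alpha$, which forces the Lemma \ref{lem:2.1} exponent to be a positive power of $N$.
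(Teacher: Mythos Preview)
Your argument is correct and follows the same skeleton as the paper's proof: write $\hat\mu_1-\hat\mu_0$ as $p^{-1}\sum_n(\xi_n-p)a_n(u)$, apply Lemma~\ref{lem:2.1} pointwise, and upgrade to a uniform statement by controlling the derivative and taking a union bound over a polynomial-sized net. The differences lie in the decomposition. The paper splits at $|u|=N^2$ rather than $|u|=N$; on $(1,N^2]$ it uses only the crude bound $|f_n(u)|\le 2|u|^{-1}$, which forces a three-case analysis (including the awkward intermediate range $N^{2/(\alpha^2+1)}\le u\le N^2$, where the side condition $YB<2p\sigma^2$ fails and one must re-enter the proof of Lemma~\ref{lem:2.1} with a different choice of $t$). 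Your use of the exact value $|a_n(u)|=2|u|^{-1}|\sin(u/2N)|$ lets the single estimate $\sin^2(u/2N)\asymp(u/N)^2$ handle all of $(1,N]$ in one stroke. For the tail the paper simply bounds $\sum_n\xi_n\le N$, which is why it must push the split out to $N^2$ before the resulting $|\hat\mu_1(u)|\lesssim p^{-1}N|u|^{-1}$ beats $\varepsilon|u|^{\frac12(\alpha^2-1)}$; your factorization through the $2\pi N$-periodic exponential sum $G$ and the high-probability bound $\sup|G|\lesssim\sqrt{pN}\,N^\delta$ is sharper by a factor $\sqrt{pN}$, which is exactly what permits the earlier transition at $|u|=N$. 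In short, your route is a genuine streamlining: it trades the paper's elementary but case-heavy middle range for a slightly more structured tail argument, and both hinge on the same strict inequality $\beta<\alpha$ to make the Lemma~\ref{lem:2.1} exponent a positive power of $N$.
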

\begin{pf} We only prove the lemma for positive values of $u$; it will be clear from the construction that the result is symmetrical around the origin. Letting $f_n (u)$ denote the Fourier transform of Lebesgue measure, we get that
\begin{equation}\hat{\mu}_0 (u)-\hat{\mu}_1 (u) =\sum_{n=1}^{N}(1-p^{-1} \xi_n
(u))f_n (u),\end{equation} where $|f_n (u)| \leq 2|u|^{-1}$. (The index $n$ in $f_n (u)$ is clearly superfluous and we use it only to conform more closely to the notation of Lemma 2.1.)  We set
$C(u)=\max|f_n (u)|$ and rewrite the desired inequality as in Lemma 2.1:
\begin{equation}\label{eq:2.12}\left|\sum_{n=1}^{N} (p-\xi_n )f_n (u)\right|<\varepsilon
pu^{\frac{1}{2}(\alpha^2 -1)},\end{equation} where $B=C(u)$ and
$\sigma^{2} =NC^2 (u)$. We now divide up the positive reals greater than $1$ and prove that inequality \ref{eq:2.10} holds with probability  close to $1$ for each section. We do this by showing that the opposite inequality
\begin{equation}\label{eq:2.16}
\left|\sum (p-\xi_n )2u^{-1}\right|\geq \varepsilon pu^{\frac{1}{2}(\alpha^2 -1)}
\end{equation} occurs with a probability approaching $0$. Because $f_n (u)$ is dependent only on $u$ (and not $n$) and $|u|^{-1}<|f_n (u)| \leq 2|u|^{-1}$, this will imply that the reverse of \ref{eq:2.12} also tends to $0$ in probability. 

For $u\geq N$, $u^{\alpha^2 +1}<N^2$, we set $Y=\varepsilon p u^{\frac{1}{2}(\alpha^2 -1)}$.
To apply Lemma 2.1, we first need to verify that $YB<2p\sigma^2$. This is easily done with
$\varepsilon <1/4$ and $u^{\alpha^2 +1}<N^2$. In computing the upper bound to the exceptional probability given in \ref{eq:2.2}, we estimate the exponent:
\begin{eqnarray}
-\frac{1}{16}p^{-1}\sigma^{-2}Y^2 &=& -\frac{1}{256}N^{-1} \varepsilon^{2}p u^{\alpha^2 +1}\nonumber \\
&=&-cpN^{-1}u^{\alpha^2+1}\nonumber \\
&<& -cN^{o(1)}N^{-\beta^2-1}N^{\alpha^2+1}\nonumber \\
&=& -cN^{\alpha^2 -\beta^2 +o(1)}<-cN^{\delta},
\end{eqnarray}
for some $\delta>0$ (since $\beta <\alpha$). This implies that the probability of \ref{eq:2.16}  tends to $0$. 
If $1\leq u <(2\varepsilon^{-1}N)^{2/(1+\alpha^2)}$, we can use the lower bound on $|f_n (u)|$ to verify that $YB<2p\sigma^2$. The exceptional probability will be the same as the one calculated for the previous case. This is sufficient for the interval $1<  u<N^{2/(\alpha^2 +1)}$.

We now suppose that $u^{\alpha^2 +1}\geq N^2$ and $u\leq N^2$. In the proof of Lemma 2.1, it is shown that
\begin{equation} \mathbf{P}[|X|\geq Y] \leq 2e^{pt^2 \sigma^2 -tY},\end{equation}
where $t$ may be chosen according to our purposes. Choosing $t=\eta B^{-1}$ for some small
$\eta >0$ and with $B$, $\sigma^2$ and $Y$ as before, the exponent in the above becomes
\begin{eqnarray}
p\eta^2 B^{-2}\sigma^2-\eta B^{-1}Y&=& p\eta^2 N-\frac{1}{2}\eta \varepsilon p u^{\frac{1}{2}(\alpha^2 +1)} \nonumber \\
&\leq& \eta^2 p N-\frac{1}{2}\eta \varepsilon p N\nonumber \\
&\leq& N(\eta^2 -\frac{1}{2}\varepsilon \eta).
\end{eqnarray}

Since we can choose $\eta$ small enough so that $\eta^2 -\frac{1}{2}\varepsilon \eta<0$, the
probability once again behaves in the desired way.

We have therefore established that for each $u$, $1< u \leq N^2$, there exists some $\delta = \delta (u)$ such that the probability of \ref{eq:2.10} being violated is smaller than $ce^{-\delta N}$. Of course, it is possible that the exceptional sets differ for each $u$, and that the union over all the values of $u$ has an unacceptably large probability. To avoid this, we consider only the finite number of $u$ which can be represented as
$u=jN^{-2}$, $N^2 < j \leq N^4$. 

In the worst case, the total exceptional probability will be the sum of the exceptional probabilities for each $u$, yielding a probability of at most $O(N^2)e^{-\delta_1 N}$, where
$\delta_1 = \min \delta (u)$ for $u=  1+1/N, 1+2/N, \dots ,N^2$. This can clearly be made as small as necessary by increasing $N$. 

As long as the maximum fluctuation of the sum over the intervals between these points remains small, we will be able to conclude that the inequality \ref{eq:2.12} holds with large probability. In order to do so, we first examine the maximum possible fluctuation of $\sum_{n=1}^{N} (p-\xi_n )f_n (u)$ by considering the derivative:
\begin{equation}
\frac{d}{du} \sum_{n=1}^{N} (p-\xi_n )f_n (u) = \sum_{n=1}^{N} (p-\xi_n )\left( \frac{ie^{iu}}{u}-\frac{(e^{iu}-1)}{u^2} \right).
\end{equation}

We observe that the modulus of this expression is less than that of $\sum_{n=1}^{N} (p-\xi_n )f_n (u)$ (supposing that $u>1$). The maximum total fluctuation over an interval of length $N^{-2}$ is then on the order of $1/N^2$ times the function value. This can easily be accommodated in the theorem, for instance by considering $\varepsilon+\varepsilon'$ for some small $\varepsilon' < \varepsilon /2$ in the statement of the theorem, leaving the rest of the proof unchanged.

For $u> N^2$, the inequality \ref{eq:2.12} becomes simpler to establish, provided we use the estimate  from \ref{eq:2.25} for the probability $p$. Separating the terms on the left-hand side, we get
\begin{eqnarray}
|\hat{\mu}_1(u)|&< &2u^{-1}p^{-1}\sum \xi_n  \nonumber \\
&<& 2^{\frac{5}{2}}\beta (\log N)^{\frac{1}{2}}u^{-1}N^{1+\beta^2}\nonumber \\
&<& 2^{\frac{5}{2}}\beta (\log N)^{\frac{1}{2}}u^{-\frac{1}{2}(1-\beta^2)}
\end{eqnarray}
and
\begin{eqnarray}
|\hat{\mu}_0(u)|<2u^{-1}.
\end{eqnarray}
Since $\beta < \alpha$, for sufficiently large values of $N$, both of the above estimates are smaller than $\varepsilon u^{\frac{1}{2}(\alpha^2 -1)}/2$, implying that \ref{eq:2.10} is satisfied. 
\end{pf}

We now proceed to the proof of Theorem \ref{thm:1.1}. Assume that the
construction as described in the preamble to lemma \ref{lem:2.2} has been accomplished on the interval
$[0,1]$, with $N$ such that the lemma holds except on a set of
probability at most $\eta_1$. 

If we were to 
try to replicate this construction on a finer subdivision of the unit interval (by simply increasing $N$, say) as in the original proof, the measure thus constructed would not necessarily have a closed support contained 
in that of the first, and restricting the second measure in a way that would nest
the support may lead to problems regarding the total mass of the limit measure - that is, 
the measure may have a total mass of $0$. 

Instead, we let $\beta$ increase slightly to $\beta_2<\alpha$, and divide the interval $[0,1]$ into $N^2$ equal intervals. An interval of the form $[(k-1)N^{-2},kN^{-2}]$, $0<k\leq N^2$, now belongs to the set $A_2$ if

\begin{equation}\label{eq:2.13}
\exists s,t \in [(k-1)N^{-2},kN^{-2}] \left( N^{-\frac{1}{2}} |X(t)-X(s)| \geq \beta_2  \sqrt{2\log N} \right)
\end{equation}

If the probability of the above event is $p_2$, we construct the second measure as
$\mu_2 (A) = p_{2}^{-1} \xi^2 (A)\mu_0 (A)$, with $\xi^2$ being the
characteristic function applicable to the new division, as in the construction of $\mu_1$. The support of $\mu_2$ is contained in that of $\mu_1$, since it can be easily shown that if \ref{eq:2.13} holds for an interval of length $N^{-2}$, then \ref{eq:2.11} holds for the containing interval of length $N^{-1}$ 

We require that lemma \ref{lem:2.2} be true for the new partition and the new probability associated with the partition, all with a total exceptional probability of less than $\eta_{1}^{2}$. If we examine the proof of the lemma, all arguments still obtain if we substitute the original values for the new ones, since all the inequalities necessary to apply lemma \ref{lem:2.1} still hold. Also, the exceptional probability may be computed in the same way as in lemma \ref{lem:2.2}, but will now be at most $\eta_{1}^{2}$ because of the use of the larger value $N^2$. This remains true for each higher exponent of $N$, hence we may apply the lemma at each stage of the construction.

The rest of the construction proceeds similarly, using intervals of length $N^{-3}$, $N^{-4}$, and so on, such that the values $\beta_3$, $\beta_4 \dots$ tend to $\alpha$. By choosing $N$ sufficiently large we can ensure that the exceptional probability is small enough to guarantee a small total exceptional probability (smaller than some $\eta$ given at the start, for instance), where we calculate the total exceptional probability as the sum of all those at each stage.  

Next, we need to establish that the mass of the measure constructed at each step does not decrease to 0 or increase to $\infty$. We first show that the measure $ \mu_1 $ can be bounded from above:
\begin{eqnarray}
\| \mu_1 \|&=& \int_{0}^{1} d\mu_1 \nonumber \\
&= & | A |p_{1}^{-1} N^{-1} \nonumber \\
&\leq& | A |N^{\beta^2 -1} N^{o(1)},
\end{eqnarray}
where $|A|$ denotes the number of intervals comprising the set $A$.

To get a suitable upper bound on $|A|$, we utilise the same approach as in lemma 3.1 of \citet{Potgieter}. That is, we consider the probability of an interval being rapid, and use the independence of the events under such consideration to calculate the probability of a certain number of `succesful' events using a binomial distribution. By \citet{Feller2}, if $S_{n}$ denotes the sum of $n$ independent variables which
may take value $1$ with probability $p$ and $0$ with probability
$q=1-p$, then
\begin{equation}\mathbf{P}\{S_{n} \geq r\} \leq
\frac{rq}{(r-n p)^2},\end{equation} when $r>n p$.

Using this, we can estimate the probability of $|A |$ having more than $N^{1-\beta^2-\gamma}$ elements for some $\gamma>0$ (where $1-\beta^2-\gamma >0$ ) by
\begin{eqnarray}
\mathbf{P}\{ |A | \geq N^{1-\beta^2-\gamma} \} &\leq & 
\frac{N^{1-\beta^2-\gamma}(1-N^{-\beta^2(1+o(1))})}{
(N^{1-\beta^2-\gamma}-N^{1-\beta^2 (1+o(1))})^2} \nonumber\\
&\leq & \frac{1}{N^{1-\beta^2 -\gamma}(1-N^{1-\beta^2(1+o(1))}N^{-1+\beta^2\gamma})^2} \nonumber \\
&\leq & \frac{1}{N^{1-\beta^2 -\gamma}} \label{eq:2.14},
\end{eqnarray} 
where we require that $-\beta^2(1+o(1))+\delta^2+\gamma>0$.
 
We now obtain the estimate
\begin{eqnarray}
\| \mu_1 \|&\leq & N^{-(1-\delta^2 -\gamma)}N^{\delta^2 -1} N^{o(1)}
\nonumber \\ &\leq & N^{o(1)}N^{-\gamma}.
\end{eqnarray}

We assume that $N$ is large enough so that $\| \mu_1 \| \leq 1$. Since the  
construction can be accomplished similarly for $N^2$, $N^3$, \dots, we conclude 
that the measures can be uniformly bounded from above for each stage of the construction. 
However, we still need to ensure that the measures to not decay to $0$, rendering them trivial.
To do so, we use the method of lemma 3.3 from \citet{Potgieter}, which again bounds the number of intervals, this time from below. Again according to \citet{Feller2}, as long as $r<mp$,
\begin{equation}
\mathbf{P}\left\{S_m \leq r\right\}
\leq  \frac{(m-r)p}{(mp-r)^2}.
\end{equation}

We use this to estimate the probability of the number of successes being less than
\begin{equation} \frac{N^{1-\delta^2}}{4\sqrt{2\log N}} \end{equation}
for some constant $\delta>0$ independent of $N$. If this is small enough, the measure will tend to $0$ for only a small proportion of sample paths.

Firstly, we need to ensure the the approximation to the required number of successes
may be used; i.e., we must verify that the following inequality holds:
\begin{equation}\label{eq:2.17}
\frac{N^{1-\delta^2}}{4\sqrt{2\log N}}<Np_1.
\end{equation}  

Using the left-hand side of the approximation \ref{eq:2.5}, we get an upper bound of 
\begin{equation}
p_1 \geq \left( \frac{1}{(2\log N)^{\frac{1}{2}}}-\frac{1}{(2\log N)^{\frac{3}{2}}} \right) N^{-\delta^2}.
\end{equation}

For $N>e$, we then have that $p_1 \geq N^{-\delta^2}(2\sqrt{2\log N})^{-1}$, verifying that 
equation \ref{eq:2.17} holds. Using the stated values of $r$ and $p_1$ (as in \ref{eq:2.17}), we find that the probability of more than $r$ intervals occurring is less than $N^{-1}\sqrt{2\log N}$. We can conclude that  
\begin{eqnarray}
\| \mu_1 \| &\geq & \frac{N^{1-\delta^2}}{4\sqrt{2\log N}} p_{1}^{-1}N^{-1} \nonumber \\ &\geq &  \frac{N^{1-\delta^2}}{4\sqrt{2\log N}} N^{-1+\delta^2} \sqrt{2\log N} = \frac{1}{4},
\end{eqnarray}
with a probability greater than $1-N^{-1}\sqrt{2\log N}$.
 
By repeating this for higher powers of $N$, we see that the lower bound will hold for any further stages of the construction. The probabilities of the exceptional sets are small enough to form a convergent series, the sum of which can be made arbitrarily small by starting with some larger value of $N$ (as expressed more completely in \ref{eq:2.20}). 

The question now becomes whether the measures constructed converge to a measure with the required properties on a subset of the set of $\alpha$-rapid points. If we can show that the measures converge strongly on the ring consisting of all intervals, this will suffice to prove convergence on the Borel sets.  

Since the measures are all defined as step functions on intervals of the form $[(k-1)N^{-m}, kN^{-m}]$, $k=1,2,\dots ,N^m$, $m=1,2,\dots$, we only need to show convergence on such.  
We suppose that an interval $I$ of length $N^{-(n+1)}$ is chosen in step $n+1$ of the construction. This implies that it is a subinterval of some $[(k-1)N^{-n}, kN^{-n}]$ which forms part of the support of $\mu_n$, since the construction guarantees that the measures have nested supports. If we can show that 
\begin{equation}\label{eq:2.18}
\mu_n ([(k-1)N^{-n}, kN^{-n}]) \leq \mu_{n+1}(I),
\end{equation} 
it will show that the measures increase on the intervals that survive each stage. By the construction of the measures,
\ref{eq:2.18} will hold if $p_{n+1}^{-1} N^{-1} \geq p_{n}^{-1}$, that is, if 
\begin{equation} \label{eq:2.19} p_n N^{-1} \leq p_{n+1}.
\end{equation} We once again use the approximation \ref{eq:2.5} to find that 
\begin{eqnarray}\label{eq:2.25}
\sqrt{2\pi }p_{n+1} &\geq & \left( \frac{1}{(2\log N^{\beta^2 (n+1)})^{\frac{1}{2}}}-\frac{1}{(2\log N^{\beta^2 (n+1)})^{\frac{3}{2}}} \right) N^{-\beta^2 (n+1)} \nonumber\\
&\geq & \frac{(n+1)\beta^2 \log N}{(2(n+1)\beta^2 \log N)^{\frac{3}{2}}}N^{-\beta^2 (n+1)}
 = \frac{N^{-\beta^2 (n+1)}}{2^{\frac{3}{2}}\beta ((n+1)\log N)^{\frac{1}{2}}}\\
\sqrt{2\pi }p_n &\leq & \frac{N^{-\beta^2 n}}{(2n \beta^2 \log N^n)^{\frac{1}{2}}}.\label{eq:2.26}
\end{eqnarray}
It is easily verified that \ref{eq:2.19} holds. This implies that the measure of each subinterval that forms part of the construction at the next stage increases, since the measures are uniformly distributed over intervals. Because the total measure as well as the number of intervals are bounded from above at each stage, this implies a convergence of measures on each collection of nested intervals, and hence strong convergence of the measures.  

We must ensure that the set on which the desired convergence does not necessarily take place is suitably small. The first exceptional set to consider is that on which lemma \ref{lem:2.2} does not hold. We can suppose that this set has measure of no more than some $\eta <1/2$. The next stage of the construction then had an exceptional set of measure no greater than $\eta^2$, and so on, with an exceptional probability of $\eta^n$ at stage $n$. The next instance where an exceptional set may occur is in the approximations of the number of intervals at each stage used to show the boundedness of the measures. The probability of there being more than $N^{n(1-\delta^2- \gamma)}$ intervals of the desired form is calculated (as in \ref{eq:2.14}) to be less than $N^{-n(1-\delta^2 -\gamma)}$ -- this is the part which guarantees an upper bound on the measures at each stage. In setting the upper bound, we find an exceptional probability of more than $N^{1-\delta^2}(4\sqrt{2\log N})^{-1}$ successful intervals of less than $N^{-1} \sqrt{2\log N}$. Summarising these results, we see that the exceptional probability for the entire construction is bounded above by the series with the $n$th term equal to
\begin{equation}\label{eq:2.20}
\eta^n + \frac{\sqrt{2\log N^n}}{N^n}+N^{-n(1-\delta^2 -\gamma)}.
\end{equation}

The series is clearly convergent, and the sum can be made smaller than any positive number by starting with a sufficiently large value of $N$. 

In the final step, we must still ascertain that the limit measure has the desired properties. Firstly, it must be established that the Fourier transforms of the measures in the construction converge as well, and that this limit is in fact the Fourier transform of our desired measure.

We know from the continuity theorem (see, for instance, p303 of \citet{Billingsley}) that, for each $u$, $\hat{\mu}_n (u) \to \hat{\mu}(u)$ as $n\to \infty$. By Lemma 2.2 and the argument following it, we know that for a certain given $\varepsilon$,
\begin{equation}
|\hat{\mu}_n (u) - \hat{\mu}_0 (u)|<\varepsilon u^{\frac{1}{2}(\alpha^2 -1)}, \textrm{ for } |u|>1
\end{equation}
for all $n$. Since $|\hat{\mu}_n (u) - \hat{\mu}_0 (u)|$ converges to $|\hat{\mu} (u) - \hat{\mu}_0 (u)|$ for each $u$, it must follow that 
\begin{equation}
|\hat{\mu} (u) - \hat{\mu}_0 (u)|<2\varepsilon u^{\frac{1}{2}(\alpha^2 -1)} \textrm{ for $|u|$ large enough}.
\end{equation}

This implies that
\begin{equation}
|\hat{\mu} (u) - \hat{\mu}_0 (u)| = o(u^{-\frac{\gamma}{2}})
\end{equation}
for all $\gamma < 1-\alpha^2$. 

From this, it follows that, for large enough $u$ and any $\gamma < 1-\alpha^2$, 
\begin{eqnarray}
|\hat{\mu}(u)| &<& |\hat{\mu}_0(u)| +o(u^{-\frac{\gamma}{2}})\nonumber \\ &=& O(u^{-1})+ o(u^{-\frac{\gamma}{2}})  = o(u^{-\frac{\gamma}{2}}).
\end{eqnarray}

For every $\gamma <1-\alpha^2$ then, we have constructed a measure $\mu$ which satisfies
\begin{equation}
|\hat{\mu} (u)| = o(u^{-\frac{\gamma}{2}}),
\end{equation}
with a probability made as close as desired to $1$ by increasing the starting value of $N$. Since starting with higher powers of $N$ decreases the exceptional probability but leads to the same measure in the limit, we can conclude that the desired property holds for the limit measure with probability $1$. This proves Theorem \ref{thm:1.1}.

It should be clear from the above proof that the method is reliant not on any specific properties of Brownian motion other than that the increments under consideration are stationary and independent, and that the probability of any interval under consideration is chosen is bounded below by the length of the interval to a constant power. We exploit this method in the next section, again in the context of Brownian motion, although it can clearly be generalised to other processes. 

\section{Functionally determined rapid points}

Brownian rapid points can also be determined by means of functions rather than constants. Results on various dimensional properties of such sets can be found in \citet{KhosPerX}. Consider the space $C_{ac}[0,1]$ of absolutely continuous functions $f:[0,1]\to \mathbb{R}$ with $f(0)=0$, endowed with the supremum norm. We also consider the space $\mathbb{B}[0,1]$ of bounded functions on $[0,1]$. An element of $C_{ac}[0,1]$ is said to have finite energy is it has finite Sobolev norm:
\begin{equation}
\| f\|_{\mathbb{H}} := \left( \int_{0}^{1} (f'(s))^2 ds \right)^{\frac{1}{2}}<\infty.
\end{equation}
The Hilbert space of functions $f\in  C_{ac}[0,1]$ with finite energy is denoted by $\mathbb{H}$. 
Let $X$ be one-dimensional Brownian motion as usual. \citet{Strassen} proved a version of \ref{eq:1.2} for functions of finite energy. For $f\in \mathbb{H}$ with Sobolev norm at most $1$ and any $t\in [0,1]$,
\begin{equation}
\liminf_{h\to 0^{+}} \sup_{0\leq s \leq 1} \left| \frac{X(t+sh)-X(t)}{\sqrt{2h\log |\log{h^{-1}}|}} - f(s)\right| = 0, \quad {\textrm{ a.s.}}
\end{equation}
We will call a point $t\in [0,1]$ $f$-rapid if
\begin{equation}
\liminf_{h\to 0^{+}} \sup_{0\leq s \leq 1} \left| \frac{X(t+sh)-X(t)}{\sqrt{2h\log{h^{-1}}}} - f(s)\right| = 0
\end{equation}
From this point on, we shall assume that $f\in \mathbb{H}$. We denote the set of $f$-rapid points for a sample path $\omega$ by $S_{\omega}(f)$ and, as usual, omitting the $\omega$ when unnecessary. 
The set $S(f) \subseteq [0,1]$ of $f$-rapid points has Hausdorff dimension $1-\| f\|^{2}_{\mathbb{H}}$ \citep{DeheuvelsMason}.
Using the methods of the previous section, we can now show that the Fourier dimension is the same.

The first step is to describe the set in question, or a large enough subset thereof, as the intersection of a union of equal subintervals of $[0,1]$. As in the proof of Theorem 1.2, at stage $j$ we subdivide the interval $[0,1]$ into $2^{j}$ equal intervals. We designate an interval $[(k-1)2^{-j}, k2^{-j}]$, $k \in \{1,2,\dots ,2^j \}$, as $f, \varepsilon_j$-rapid if   
\begin{equation}\label{eq:2.21}
\exists t_1 <t_2 \in [(k-1)2^{-j}, k2^{-j}] \forall s\in [0,1] \left| \frac{ X(t_1+s(t_2 -t_1))-X(t_1) }{\sqrt{(t_2 -t_1) \log (t_2 -t_1)^{-1}}} - f(s) \right|< \varepsilon_j.
\end{equation}
We denote the random set of $f, \varepsilon_j$-rapid intervals by $\{ I_{i}^{j} (\omega)\}_{i=1}^{2^j}$. For a suitable selection of $\varepsilon_j \to 0$ and accompanying interval lengths $h_j$, as $j\to \infty$, 
\begin{equation}
T_{\omega}(f)= \limsup I_{i}^{j} =  \bigcap_{j=1}^{\infty} \bigcup_{i=1}^{2^j} I_{i}^{j} (\omega) = S_{\omega}(f).
\end{equation} 
The \emph{limsup} is valid in the above because the intervals are once again nested. 

It is clear that each random set $I_{i}^{j}(\omega)$ is independent, and by following the method of the previous section we can find the Fourier dimension if we can find a lower bound on the probability of each interval. To do this, we will use a result of \citet{Schilder}. Before stating it, we require some notation, and we follow the formulation in \citet{DeheuvelsMason}: 
\begin{eqnarray}
\mathbb{S}_r & = & \{f \in \mathbb{B}[0,1]: f\in C_{ac}[0,1],\, \| f\|_{\mathbb{H}}\leq r\}, \quad r>0\\
S_{\Lambda}^{\varepsilon} & = & \{ f\in \mathbb{B}[0,1]: \| f-g \|_{\infty}<\varepsilon \textrm{ for some } g\in \mathbb{S}_{\Lambda} \}.
\end{eqnarray}
We define the function $J$ as
\begin{equation}
J(f) = \left\{ \begin{array}{ll} \int_{0}^{1} (f'(t))^2 dt & \textrm{ if } f\in C_{ac}[0,1] \\
\infty & \textrm{ otherwise}
\end{array} \right. 
\end{equation}
For a set $F\subseteq \mathbb{B}[0,1]$, 
\begin{equation}
J(F) = \inf_{f\in F } J(f) \textrm{ if } F\neq \emptyset, \textrm{ and } J(F)=\infty \textrm{ otherwise.}
\end{equation}

We can now state the theorem which will, when suitably interpreted, yield the probabilities for intervals to be $f,\varepsilon$-rapid.
\begin{thm} \citep{DeheuvelsMason}
For each closed subset $F$ of $\mathbb{B}[0,1]$, we have
\begin{equation}
\limsup_{\lambda \to \infty} \lambda^{-1} \log \mathbf{P}\left( \frac{X(\lambda s)}{2^{\frac{1}{2}} \lambda}
\in F \right) \leq -J(F),
\end{equation}
and for each open $G \subseteq \mathbb{B}[0,1]$
\begin{equation}
\liminf_{\lambda \to \infty} \lambda^{-1} \log \mathbf{P}\left( \frac{X(\lambda s)}{2^{\frac{1}{2}} \lambda}
\in G \right) \geq -J(G)
\end{equation}
\end{thm}
By the scaling property, for $\lambda$, $h >0$, the processes $2^{-1/2}\lambda^{-1}X(\lambda \chi_{0,1})$ and $(2h\lambda)^{-1/2}X(h \chi_{[0,1]})$ are identically distributed, where $\chi_{[0,1]}$ denotes the indicator function of the unit interval.

Using the stationarity of Brownian motion, we have that, for fixed $t\in [0,1]$, $h=2^{-j}$ and $\lambda = \log 2^j$,
\begin{equation}
\mathbf{P}\left[ \forall 0\leq s\leq 1 \left|  \frac{X(t+s2^{-j})-X(t) }{\sqrt{2^{-j+1} \log 2^j}} - f(s) \right| < \varepsilon_j\right] = 
\mathbf{P} \left[ \forall 0\leq s\leq 1  \left|  \frac{ X(s2^{-j}) }{\sqrt{2^{-j+1} \log 2^j}} - f(s) \right| < \varepsilon_j\right].
\end{equation}
Letting $\Lambda = \| f\|_{\mathbb{H}}$, we set $G_j = \mathbb{S}_{\Lambda}^{\varepsilon_j/2}$, which is indeed open in $\mathbb{B}[0,1]$. We clearly have the inclusion
\begin{equation}
\left\{ \omega \in \Omega: \frac{ X(sh) }{\sqrt{2^{-j+1} \log 2^j}}  \in G_j \right\} \subseteq \left\{\omega \in \Omega: \forall 0\leq s \leq 1 \left|  \frac{ X(sh) }{\sqrt{2^{-j+1} \log 2^j}} - f(s) \right| < \varepsilon_j \right\} .
\end{equation}
Since $f\in G_j$, we have that $J(G_j ) \leq \| f\|_{\mathbb{H}}^{2}$. As such, by the second part of the previous theorem,
\begin{equation}
\liminf_{h \to 0^{+}} \lambda^{-1}\log \mathbf{P} \left[ \forall \leq s\leq 1  \left|  \frac{ X(sh) }{\sqrt{2^{-j+1} \log 2^j}} - f(s) \right| < \varepsilon_j\right] \geq -\| f\|_{\mathbb{H}}^2.
\end{equation}
This implies that we have a sequence $\{ h_j \}_{j=1}^{\infty}$ such that, for each $j$,
\begin{equation}
\mathbf{P} \left[ \forall 0\leq s\leq 1  \left|  \frac{ X(sh_j) }{\sqrt{2^{-j+1} \log 2^j}} - f(s) \right|< \varepsilon_j\right] \geq e^{-\| f\|_{\mathbb{H}}^{2}\log h_{j}^{-1}}= h_{j}^{\| f\|_{\mathbb{H}}^{2}.}
\end{equation}
With this inequality in place, we can apply the methods of the previous sections. We have therefore proved the following theorem:
\begin{thm}
$S(f)$, the $f$-rapid points of a one-dimensional Brownian motion $X:\Omega \times [0,1] \to \mathbb{R}$, have Fourier dimension $1-\| f\|^{2}_{\mathbb{H}}$, almost surely, and is therefore a Salem set.
\end{thm}
In conclusion, we should point out that this method for determining the Fourier dimension of random sets can be applied to any similar construction, independent of whether these sets are determined by Brownian motion or not.

\vspace{0.5cm}
\noindent{\bf References}

\end{document}